
\documentclass[preprint,12pt]{elsarticle}




\usepackage{amssymb}
\usepackage{amsthm}
\usepackage[all]{xy}
\newtheorem{thm}{Theorem}

\newtheorem{lemma}{Lemma}

\newcommand{\ff}{{\mathfrak f}}

\newcommand{\cF}{{\mathcal F}}

\newcommand{\Q}{{\mathbb Q}}
\newcommand{\R}{{\mathbb R}}
\newcommand{\Z}{{\mathbb Z}}
\newcommand{\N}{{\mathbb N}}
\newcommand{\C}{{\mathbb C}}






\begin{document}

\begin{frontmatter}



\title{Every totally real algebraic integer is a tree eigenvalue}


\author{Justin Salez}
\ead[url]{http://www.proba.jussieu.fr/$\sim$salez/}
\address{Universit\'e Paris Diderot \& LPMA}

\begin{abstract}
Graph eigenvalues are examples of totally real algebraic integers, i.e. roots of real-rooted monic polynomials with integer coefficients. Conversely, the fact that every totally real algebraic integer occurs as an eigenvalue of some finite graph is a deep result, conjectured forty years ago by Hoffman, and proved seventeen years later by Estes. This short paper provides an independent and elementary proof of a stronger statement, namely that the graph may actually be chosen to be a tree. As a by-product, our result implies that the atoms of the limiting spectrum of $n\times n$ symmetric matrices with independent Bernoulli$\,\left(\frac{c}{n}\right)$ entries ($c>0$ is fixed as $n\to\infty$) are exactly the totally real algebraic integers. This settles an open problem raised by Ben Arous (2010). 
\end{abstract}

\begin{keyword}
adjacency matrix \sep tree \sep eigenvalue \sep totally real algebraic integer  

\MSC[2010]05C05 \sep 05C25 \sep 05C31 \sep 05C50

\end{keyword}

\end{frontmatter}


\section{Introduction}
\label{}
By definition, the \emph{eigenvalues} of a finite graph $G=(V,E)$ are the roots of its characteristic polynomial $\Phi_G(x):=\det(xI-A)$, where $A=\{A_{i,j}\}_{i,j\in V}$ is the adjacency matrix of $G$:
$$A_{i,j}=
\left\{
\begin{array}{cl}
1 & \textrm{if } \{i,j\}\in E \\
0 & \textrm{otherwise}.
\end{array}
\right.
$$
Those eigenvalues capture a considerable amount of information about $G$. For a detailed account, see e.g.  \cite{cds,brouwer}. It follows directly from this definition that any graph eigenvalue is a \emph{totally real algebraic integer}, i.e. a root of some real-rooted monic polynomial with integer coefficients. Remarkably enough, the converse is also true: every totally real algebraic integer is an eigenvalue of some finite graph. This deep result was conjectured forty years ago by Hoffman \cite{hoffman}, and established seventeen years later by Estes \cite{estes}, see also \cite{bass}. The present paper provides an independent, elementary proof of a stronger statement, namely that the graph may actually be chosen to be a tree.
\begin{thm}
\label{th}
Every totally real algebraic integer is an eigenvalue of some finite tree.  
\end{thm}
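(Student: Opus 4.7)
The plan is to prove the slightly stronger divisibility statement: for every monic $p \in \Z[X]$ whose roots are all real, there is a finite tree $T$ with $p(X) \mid \Phi_T(X)$. Applying this to the minimal polynomial of any totally real algebraic integer $\alpha$ puts $\alpha$ into the spectrum of $T$, which gives the theorem. The central tool is the Laplace expansion along the row of a chosen root $r$; for a tree (where there are no cycle corrections) it yields the vertex-deletion recurrence
\[
\Phi_T(X) \;=\; X\,\Phi_{T-r}(X) \;-\; \sum_{u \sim r}\Phi_{T-r-u}(X).
\]
When $T$ is built by attaching rooted subtrees $(T_1,r_1),\dots,(T_k,r_k)$ to a new common root $r$, this specializes to
\[
\Phi_T \;=\; X\prod_{i=1}^k \Phi_{T_i} \;-\; \sum_{i=1}^k \Phi_{T_i-r_i}\prod_{j\neq i}\Phi_{T_j}.
\]
It is therefore natural to encode each rooted tree by the pair $(P,Q):=(\Phi_T,\Phi_{T-r})$ of monic $\Z$-polynomials, with the atomic pair $(X,1)$ corresponding to a single vertex and the joining formula above providing the combinatorial rule for producing new admissible pairs from old ones.

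I would argue by induction on $n=\deg p$. The base case $n\le 1$ is handled by stars: $\Phi_{K_{1,m}}(X)=X^{m-1}(X^{2}-m)$, so every linear $p(X)=X-a$ with $a\in\Z$ divides $\Phi_{K_{1,a^{2}}}$ (and a single vertex handles $a=0$). For the inductive step I would use the inductive hypothesis to realize carefully chosen real-rooted monic $\Z$-polynomials of strictly smaller degree as factors of characteristic polynomials of rooted trees, and then combine those trees via the joining operation so as to produce a tree whose characteristic polynomial is divisible by $p$. A clean tool for carrying divisibility through the subtraction is the \emph{inflation} identity obtained by joining $k$ identical copies of a rooted tree $(T,r)$ to a new common root:
\[
\Phi_{T_{\text{new}}} \;=\; \Phi_T^{\,k-1}\bigl(X\,\Phi_T - k\,\Phi_{T-r}\bigr),
\]
from which every factor of $\Phi_T$ survives in $\Phi_{T_{\text{new}}}$ with multiplicity $k-1$; combined with joining of distinct subtrees, this grants considerable flexibility in tailoring the final characteristic polynomial.

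The main obstacle is this inductive step: one must choose the subtrees (and their multiplicities) so as to guarantee simultaneously that (i) $p$ divides $\Phi_T$ of the joined tree after the subtraction, \emph{and} (ii) every polynomial fed back into the induction is still monic with integer coefficients and only real roots. The crux of the proof should therefore be an algebraic lemma producing, for each real-rooted monic $p\in\Z[X]$, a suitable family of lower-degree real-rooted monic $\Z$-polynomials whose tree realizations, when plugged into the joining formula, combine to yield $p\mid\Phi_T$. Finding the right inductive invariant, and the correct recipe for these auxiliary polynomials so that integrality and real-rootedness survive at every level of the recursion, is where I expect the entire difficulty of the argument to lie.
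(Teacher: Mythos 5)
The setup you build --- the vertex-deletion recursion for trees, the joining formula when several rooted subtrees are attached to a fresh common root, and the inflation identity --- matches the paper's own starting point (its equation~(\ref{eq:rec})). But you then stop exactly where the difficulty begins, and you say so yourself: the inductive step is left open (``this is where I expect the entire difficulty of the argument to lie''). That step \emph{is} the theorem. You give no mechanism for choosing the lower-degree auxiliary polynomials so that (i) $p$ divides the combined characteristic polynomial after the subtraction in the joining formula, while (ii) every auxiliary polynomial fed back into the induction remains monic, integral and real-rooted. Preserving real-rootedness through a subtraction is exactly the kind of thing that does not come for free, and neither the inflation identity nor the star base case suggests how to achieve it. As written this is a research plan, not a proof.

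The paper does not take this route. Instead of carrying the pair $(\Phi_T,\Phi_{T\setminus o})$ and inducting on $\deg p$, it encodes a rooted tree by the single rational function $\ff_{(T,o)}(X)=1+\Phi_T(X)/(X\Phi_{T\setminus o}(X))$, for which the recursion collapses to $\ff_{(T,o)}=X^{-2}\sum_{i\sim o}\bigl(1-\ff_{(T_i,i)}\bigr)^{-1}$. Fixing $X=\lambda$ and forgetting trees altogether, the theorem reduces to showing $1\in\cF(\lambda)$, where $\cF(\lambda)$ is the smallest subset of $\C$ containing $0$ and closed under $\alpha\mapsto\bigl(\lambda^2(1-\alpha)\bigr)^{-1}$ and under addition. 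The key technical input is a partial-fraction / interlacing lemma (Lemma~\ref{lm:decomposition}): for an algebraic number $\zeta$ of degree $n$ with minimal polynomial $P$ and any rationals $q_1>\cdots>q_n$, there are integers $m_i$, with $m_i$ of the same sign as $(-1)^iP(q_i)$, such that $\sum_i m_i/(\zeta-q_i)$ is a positive integer. Applied to a totally positive $\zeta'=\lambda^2(1-\alpha)$ with the $q_i$ chosen to interleave the real nonnegative roots of $P$, this produces a positive integer inside $\cF(\lambda)$; two further closure arguments then identify $\cF(\lambda)$ as $\Q(\lambda^2)\ni 1$. It is precisely this algebraic side of the argument --- the closure-set reformulation, the interlacing lemma, and the elementary facts about totally positive numbers --- that your proposal correctly identifies as the missing crux but does not supply.
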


Trees undoubtedly play a special role in many aspects of graph theory. We therefore believe that the strengthening provided by Theorem \ref{th} may be of independent interest, beyond the fact that it provides a simpler proof of Hoffman's conjecture. In addition, Theorem \ref{th} settles an open problem raised by Ben Arous \cite[Problem 14]{AIM}, namely that of determining the set  $\Sigma$ of atoms of the limiting spectral distribution of $n\times n$ symmetric matrices with independent Bernoulli$\,\left(\frac{c}{n}\right)$ entries ($c>0$ is fixed and $n\to\infty$). Indeed, it follows from the log-H\"older continuity of the spectrum of integer matrices at algebraic numbers (see e.g. \cite[Section 6]{LC}) that $\Sigma$ is contained in the set of totally real algebraic integers. On the other hand, $\Sigma$ is easily seen to contain all tree eigenvalues, as noted by Ben Arous. Theorem \ref{th} precisely states that those inner and outer bounds coincide, thereby settling the question.

\section{Outline of the proof}

Let $T$ be a finite tree with a distinguished vertex $o$ (the root). Removing $o$ naturally yields a decomposition of $T$ into smaller rooted trees $T_1,\ldots,T_d$ ($d\in\N$) as depicted in the following  diagram:
\begin{displaymath}
    \xymatrix{
         & o \ar@{-}[dl] \ar@{-}[d]  \ar@{-}[drr]  & \\
         T_1 & T_2 & \cdots & T_d}
\end{displaymath}
To such a rooted tree, let us associate the rational function 
\begin{eqnarray}
\label{f}
{\ff}_{T}(x) & = & 1-\frac{\Phi_T(x)}{x\Phi_{T\setminus o}(x)}\ = \ 1-\frac{\Phi_T(x)}{x\Phi_{T_1}(x)\cdots \Phi_{T_d}(x)}.
\end{eqnarray}
Expressed in terms of this function, the classical recursion for the characteristic polynomial of trees (see e.g. \cite[Proposition 5.1.1]{brouwer}) simply reads
\begin{eqnarray}
\label{rec}
{\ff}_{T}(x) & = & \frac{1}{x^2}\sum_{i=1}^d\frac{1}{1-\ff_{T_i}(x)},
\end{eqnarray}
the sum being interpreted as $0$ when empty (i.e. when $T$ is reduced to $o$). 
Now fix $\lambda\in\C\setminus\{0\}$. In view of (\ref{f}), the problem of finding a (minimal) tree with eigenvalue $\lambda$ is equivalent to that of finding $T$ such that $\ff_T(\lambda)=1$. Thanks to the recursion (\ref{rec}), this task boils down to that of generating the number $1$ from the initial seed $0$ using repeated applications of the maps
\begin{eqnarray*}
\left(\alpha_1,\ldots,\alpha_d\right) & \longmapsto & \frac{1}{\lambda^2}\sum_{i=1}^d\frac{1}{1-\alpha_i}\qquad (d\in\N).
\end{eqnarray*}
As an example, consider the golden ratio $\lambda=\frac{1+\sqrt{5}}{2}$. Iterating $4$ times the map $\alpha\mapsto\frac{1}{\lambda^2(1-\alpha)}$ and using the identity $\lambda^2=\lambda+1$ yields successively:
\begin{eqnarray*}
\begin{array}{ccccccc}
0 & \longrightarrow & \frac{1}{\lambda^2} 
  & \longrightarrow & \frac{1}{\lambda^2-1}=\frac{1}{\lambda}
  & \longrightarrow & \frac{1}{\lambda^2-\lambda}=1,
\end{array}
\end{eqnarray*}
which shows that $\lambda$ is an eigenvalue of the linear tree $T=P_4$. Remarkably enough, this seemingly specific argument can be extended in a systematic way to any totally real algebraic integer $\lambda$, and the numbers that may be produced in this way can be completely determined. To formalize this, let us fix a totally real algebraic integer $\lambda\neq 0$, and introduce the rational function
\begin{eqnarray}
\label{map}
\Psi(x) & = & \frac{1}{\lambda^2(1-x)}.
\end{eqnarray}
Define $\cF\subseteq \R$ as the smallest set containing $0$ and satisfying for all $d\geq 1$,
\begin{eqnarray}
\label{maps}
\alpha_1,\ldots,\alpha_d\in \cF\setminus\{1\} & \Longrightarrow & \Psi(\alpha_1)+\cdots+\Psi(\alpha_d)\in\cF.
\end{eqnarray}
We will prove that $\cF$ is nothing but the field generated by $\lambda^2$:
\begin{eqnarray}
\label{F}\cF=\left\{\frac{P(\lambda^2)}{Q(\lambda^2)}\colon P,Q\in\Z[x], Q(\lambda^2)\neq 0\right\}=:\Q(\lambda^2).
\end{eqnarray}
In particular, $1\in \cF$, and Theorem \ref{th} follows. The remainder of the paper is devoted to the proof of (\ref{F}). The detailed argument is given in Section \ref{proof}, while Section \ref{prelim} provides the necessary background on algebraic numbers.

\section{Algebraic preliminaries}
\label{prelim}
A number $\zeta\in\C$ is \emph{algebraic} if there is $P\in\Q[x]$ such that $P(\zeta)=0$. In that case, such  $P$  are exactly the multiples of a unique monic polynomial $P_0\in\Q[x]$, called the \emph{minimal polynomial} of $\zeta$. The algebraic number $\zeta$ is 
\begin{itemize}
\item \emph{totally real} if all the complex roots of $P_0$ are real ;
\item \emph{totally positive} if all the complex roots of $P_0$ are real and positive.
\end{itemize}
The following Lemma gathers the basic properties of algebraic numbers that will be used in the sequel. These are well-known (see e.g. \cite{lang}) and follow directly from the fact that if $P(x)=\prod_{i}(x-\alpha_i)$ and $Q(x)=\prod_{j}(x-\beta_j)$ have rational coefficients, then so do the polynomials
\begin{eqnarray*}
\prod_{i,j}(x-\alpha_i-\beta_j), \qquad & \displaystyle{\prod_{i}\left(x-\frac{1}{\alpha_i}\right),} & \qquad  \prod_{i,j}(x-\alpha_i\beta_j)\\
\prod_{i}(x+\alpha_i), \qquad & \displaystyle{\prod_{i}(x^2-\alpha^2_i),}& \qquad \prod_{i}(x^2-\alpha_i).
\end{eqnarray*}

\begin{lemma}[Elementary algebraic properties]\label{lm:algebra} $\qquad\qquad\qquad$
\renewcommand{\theenumi}{\alph{enumi}}
\begin{enumerate}
\item The totally real algebraic numbers form a sub-field of $\R$.
\item The set of totally positive algebraic numbers is stable under $+,\times,\div$.
\item If $\alpha$ is totally real and $\beta$ is totally positive, then $\alpha+n\beta$ is totally positive for all sufficiently large $n\in\N$. 
\item If $\alpha\neq 0$ is totally real, then $\alpha^2$ is totally positive.
\end{enumerate}
\end{lemma}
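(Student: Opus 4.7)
The plan is a uniform application of the meta-strategy encoded in the hint: to show that a number $\gamma\in\C$ is totally real (resp.\ totally positive), it suffices to exhibit \emph{any} polynomial $R(X)\in\Q[X]$ having $\gamma$ as a root and having all of its roots real (resp.\ non-negative). Then the minimal polynomial of $\gamma$, being a divisor of $R$, inherits this sign condition on its roots, which are precisely the Galois conjugates of $\gamma$. All the witness polynomials I shall need already appear in the list preceding the lemma, and their rationality is guaranteed there (each is symmetric in the $\alpha_i$'s and $\beta_j$'s, hence a polynomial in the elementary symmetric functions of those roots, i.e., in the coefficients of the corresponding minimal polynomials).

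For (a), taking $\alpha,\beta$ totally real with conjugates $\alpha_i,\beta_j\in\R$, the polynomials $\prod_{i,j}(X-\alpha_i-\beta_j)$, $\prod_{i,j}(X-\alpha_i\beta_j)$, $\prod_i(X+\alpha_i)$, and (when $\alpha\neq 0$) $\prod_i(X-1/\alpha_i)$ witness that $\alpha+\beta$, $\alpha\beta$, $-\alpha$, and $1/\alpha$ are totally real. For (b), exactly the same four witnesses work, combined with the obvious remark that sums, products, and reciprocals of non-negative reals remain non-negative. The one subtle point is that a non-zero totally positive number $\alpha$ cannot have $0$ among its conjugates---otherwise $X$ would divide the irreducible minimal polynomial of $\alpha$, forcing $\alpha=0$---so the conjugates are in fact strictly positive, and reciprocation is well defined and preserves positivity.

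For (c), let $m:=\min_j \beta_j$, which is strictly positive by the irreducibility argument just used. For any integer $k$ with $km>\max_i|\alpha_i|$, each $\alpha_i+k\beta_j\geq -|\alpha_i|+km>0$, and the polynomial $\prod_{i,j}(X-\alpha_i-k\beta_j)\in\Q[X]$ certifies that $\alpha+k\beta$ is totally positive. For (d), the forward direction is immediate: any Galois conjugate of $\alpha^2$ has the form $\sigma(\alpha)^2$ for some automorphism $\sigma$, a square of a real conjugate of $\alpha$, hence non-negative. For the converse, if $\beta$ is totally positive with conjugates $\beta_j\geq 0$, then $\sqrt{\beta}$ is a root of $\prod_j(X^2-\beta_j)\in\Q[X]$, whose roots $\pm\sqrt{\beta_j}$ are all real, so $\sqrt{\beta}$ is a totally real number squaring to $\beta$.

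I do not anticipate any genuine obstacle: each part reduces to identifying the correct witness among those already listed, and the hint has essentially done that identification in advance. The only care needed is in the non-vanishing claims invoked in (b) and (c), which amount to a one-line appeal to the irreducibility of the minimal polynomial of a non-zero algebraic number.
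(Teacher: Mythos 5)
Your proof is correct and follows exactly the strategy the paper intends: it uses the listed rational witness polynomials together with the observation that the minimal polynomial of the target number divides the witness, so its roots inherit the real/non-negative sign condition. The paper leaves this as "follows directly" from the list; you have simply spelled out what that means, including the small but necessary remark that a nonzero algebraic number has no zero conjugate.
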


We shall also use twice the following elementary result.
\begin{lemma}
\label{lm:decomposition}
Let $\zeta$ be algebraic with minimal polynomial $P$. Set $n=\deg P$. Then for any $q_1>\ldots>q_n\in\Q$,  there exist $m_1,\ldots,m_n\in\Z$ such that
\begin{eqnarray}
\label{eq:decomposition}
\frac{m_1}{\zeta-q_1}+\cdots+\frac{m_n}{\zeta-q_n} & \in & \N^+=\{1,2,\ldots\},
\end{eqnarray}
and $m_k$ has the same sign as $(-1)^kP(q_k)$ for every $k\in\{1,\ldots,n\}$.  
\end{lemma}
\begin{proof}
For  $1\leq k\leq n$, consider the rational number
\begin{eqnarray*}
r_k  &:=&  \frac{-P(q_k)}{\prod_{j\neq k}(q_k-q_j)}.
\end{eqnarray*}
Note that $r_k$ has the same sign as $(-1)^kP(q_k)$, since $q_1>\ldots>q_n$. Moreover, 
\begin{eqnarray*}
r_1\prod_{k\neq 1}(x-q_k)+\cdots+r_n\prod_{k\neq n}(x-q_k) & = & \prod_{k=1}^n(x-q_k)-P(x).
\end{eqnarray*}
Indeed, both sides are polynomials of degree less than $n$, and they coincide at the $n$ points $q_1,\ldots,q_n$. 
Evaluating at $x=\zeta$ gives 
\begin{eqnarray*}
\frac{r_1}{\zeta-q_1}+\cdots+\frac{r_n}{\zeta-q_n} & =  & 1,
\end{eqnarray*}
and multiplying this by a large enough integer yields the result. 
\end{proof}

\section{Proof}
\label{proof}
Before we start, let us make three simple observations which will be used several times in the sequel. First, by (\ref{maps}), we have for any $k\in \N$,
\begin{eqnarray}
\label{fact}
\frac{1}{\lambda^2-k} & = & \Psi\left(\underbrace{\Psi\left(0\right)+\cdots+\Psi(0)}_{\textrm{k  terms}}\right)\in\cF.
\end{eqnarray}
Second, $\cF$ is stable under internal addition:
\begin{eqnarray}
\label{add}
\alpha,\beta\in\cF & \Longrightarrow & \alpha+\beta\in\cF.
\end{eqnarray} 
Indeed, the conclusion is trivial if $\alpha=0$ or $\beta=0$. Now if $\alpha,\beta$ are non-zero elements of $\cF$, then by construction they are of the form
\begin{eqnarray*}
\alpha = \Psi(\alpha_1)+\cdots+\Psi(\alpha_n) & \textrm{ and } &
\beta = \Psi(\beta_1)+\cdots+\Psi(\beta_m),
\end{eqnarray*}
for some $n,m\geq 1$ and $\alpha_1,\ldots,\alpha_n,\beta_1,\ldots,\beta_m$ in $\cF\setminus\{1\}$. But then,
\begin{eqnarray*}
\alpha+\beta & = &  \Psi(\alpha_1)+\cdots+\Psi(\alpha_n)+\Psi(\beta_1)+\cdots+\Psi(\beta_m),
\end{eqnarray*}
which, by (\ref{maps}), shows that $\alpha+\beta\in\cF$.  
Third, the field $\Q(\lambda^2)$ obviously contains $0$ and satisfies (\ref{maps}). Since $\cF$ is minimal with this property, we deduce that 
\begin{eqnarray}
\label{half}
\cF\subseteq\Q(\lambda^2).
\end{eqnarray} 
By part (a) of Lemma \ref{lm:decomposition}, we also get that all elements in $\cF$ are totally real.

\section*{Step 1: $\cF$ contains a positive integer}
We may assume that $1\notin\cF$, otherwise there is nothing to prove. Consequently, we do not need to worry about divisions by zero when applying $\Psi$ to an element $\alpha\in\cF$.  Let us first apply Lemma \ref{lm:decomposition} to $\zeta=\lambda^2$ with $q_k=k, 1\leq k \leq \deg\zeta$. From (\ref{fact}) and (\ref{add}), it follows that the sum appearing in (\ref{eq:decomposition}) is the difference of two elements in $\cF$. 
In other words, we have found $\Delta\in\N^+$ and $\alpha$ with the following property: 
\begin{eqnarray}
\label{shift}
\alpha\in \cF & \textrm{ and } & \alpha-\Delta\in\cF.
\end{eqnarray}
As already noted, $\alpha$ is totally real. In fact we may even assume that 
$1-\alpha$ is totally positive, because $\alpha'=\alpha+\beta$ also satisfies (\ref{shift}) for any $\beta\in\cF$, and choosing $\beta=\frac{j}{\lambda^2-k}$ with $j,k\in\N$ large enough eventually makes $1-\alpha'$ totally positive, by parts (b) and (c) of Lemma \ref{lm:algebra}. In turn, parts (b) and (d) now guarantee that
\begin{eqnarray}
\label{tp}
\xi:=\lambda^2(1-\alpha) & \textrm{ is totally positive}.
\end{eqnarray}

Now fix $j,k\in\N$ and set $i=(\Delta-1)j+1$. Since $\cF$ is stable under addition, property (\ref{shift}) and the fact that $\frac{1}{\lambda^2}=\Psi(0)\in\cF$ imply that
\begin{eqnarray*}
\underbrace{\alpha+\cdots+\alpha}_{i\textrm{ terms}}
+
\underbrace{(\alpha-\Delta)+\cdots+(\alpha-\Delta)}_{j\textrm{ terms}}
+
\underbrace{\frac{1}{\lambda^2}+\cdots+\frac{1}{\lambda^2}}_{k\textrm{ terms}} & \in & \cF.
\end{eqnarray*}
But this number equals $1-(\Delta j+1)(1-\alpha)+\frac{k}{\lambda^2}$, so applying $\Psi$ gives
\begin{eqnarray*}
\frac{1}{(\Delta j+1)\xi-k}  & \in  & \cF.  
\end{eqnarray*}
Adding up $\Delta j+1$ copies of this last number, we finally arrive at
\begin{eqnarray}
\label{dense}
\frac{1}{\xi-q}\in \cF & \textrm{ for any }& q\in{\cal Q}:=\left\{\frac{k}{\Delta j+1}:j,k\in\N\right\}.
\end{eqnarray}

We may now conclude: by (\ref{tp}), the minimal polynomial $P$  of $\xi$ has $n:=\deg P$ pairwise distinct positive roots. Since $\cal Q$ is dense in $[0,\infty)$, one can find $q_1>\cdots>q_n$ in $\cal Q$ that interleave those roots, in the sense that  
$P(q_k)$ has sign $(-1)^k$ for every $1\leq k \leq n$. Consequently, Lemma \ref{lm:decomposition} provides us with {non-negative} integers $m_1,\ldots,m_n$ such that 
$$
\frac{m_1}{\xi-q_1}+\cdots+\frac{m_n}{\xi-q_n}\in\N^+.
$$
On the other hand, this sum is in $\cF$ by (\ref{dense}) and (\ref{add}). Thus, $\cF\cap\N^+\neq\emptyset$.

\section*{Step 2: $(\cF,+)$ is a group}
 We know that $\cF$ contains some $d\in\N^+$. Since $d+d$ also belongs to $\cF$, we may assume without loss of generality that $d\neq 1$ to avoid divisions by zero below. Now fix $\alpha\in-\cF$ with $\alpha\neq 1$. Since $\cF$ is stable under addition,
$$d+\underbrace{(-\alpha)+\cdots+(-\alpha)}_{d-1\textrm{ terms}}\in \cF.$$
Applying $\Psi$ shows that $\frac{-1}{\lambda^2(1-\alpha)(d-1)}\in\cF$. Adding up $(d-1)$ copies of this number, we conclude that $\frac{-1}{\lambda^2(1-\alpha)}\in\cF$. We have  proved: 
\begin{eqnarray*}
\label{negation}
\alpha\in(-\cF)\setminus\{1\} & \Longrightarrow & \Psi(\alpha)\in -\cF.
\end{eqnarray*}
In other words, $-\cF$ is stable under $\Psi$. In view of (\ref{add}), we deduce that $-\cF$ satisfies (\ref{maps}). By minimality of $\cF$, we conclude that $\cF\subseteq-\cF$, i.e. that $\cF$ is stable under negation. Thus, the monoid $(\cF,+)$ is a group. 

\section*{Step 3: $\cF$ is the field $\Q(\lambda^2)$.}
In view of (\ref{half}), we only need to show that for $P,Q\in\Z[x]$ with $Q(\lambda^2)\neq 0$,
\begin{eqnarray}
\label{field}
\frac{P(\lambda^2)}{Q(\lambda^2)} & \in &\cF.
\end{eqnarray}
Since $\lambda^2$ is an algebraic integer, we may assume that $Q$ is monic with  $\deg Q>\deg P$ (otherwise, replace $Q$  with $Q+P_0^{\deg P}$, where $P_0$ denotes the minimal polynomial of $\lambda^2$). 
Let us prove the claim by induction over $n=\deg Q$. The case $n=0$ is simply the fact that $0\in\cF$. Now, assume that the claim holds for a certain $n\in\N$, and consider 
\begin{eqnarray*}
Q(x) & = & x^{n+1}+a_nx^n+\cdots+a_0,
\end{eqnarray*}
with $a_0,\ldots,a_n\in\Z$.  Let us first prove (\ref{field})  in the following two special cases:
\begin{itemize}
\item Case 1: $P(x)=x^n$. By our induction hypothesis, $\frac{1}{1+\lambda^{2n}}\in\cF$ and hence
$$\frac{1}{\lambda^{2n+2}}+\frac{1}{\lambda^2}=\Psi\left(\frac{1}{1+\lambda^{2n}}\right)\in\cF.$$
But $\cF$ also contains $\frac{1}{\lambda^2},\ldots,\frac{1}{\lambda^{2n}}$ by our induction hypothesis. Since $(\cF,+)$ is a group, one deduces that $\cF$ contains
$$-\left(\frac{a_n}{\lambda^2}+\cdots+\frac{a_0}{\lambda^{2n+2}}\right)=1-\frac{Q(\lambda^2)}{\lambda^{2n+2}}.$$ 
Finally, applying $\Psi$ shows that 
$\frac{\lambda^{2n}}{Q(\lambda^2)}=\frac{P(\lambda^2)}{Q(\lambda^2)}\in\cF$, as desired.
\item Case 2: $P$ is monic of degree $n$ with $P(0)=1$. Then
$$R(x):=P(x)-\frac{Q(x)-Q(0)P(x)}{x}$$
 is a polynomial over $\Z$ with $\deg R<n$. Thus, our induction hypothesis guarantees that $\cF$ contains $\frac{R(\lambda^2)}{P(\lambda^2)}$, hence $\frac{R(\lambda^2)}{P(\lambda^2)}-\frac{Q(0)}{\lambda^2}$ and hence also 
 $$\Psi\left(\frac{R(\lambda^2)}{P(\lambda^2)}-\frac{Q(0)}{\lambda^2}\right)=\frac{P(\lambda^2)}{Q(\lambda^2)}.$$
\end{itemize}
For the general case, note that every monomial $x^k$ $(0\leq k\leq n)$ may be written as a signed sum of polynomials $P$ of the form considered in the two special cases above. Since $(\cF,+)$ is a group, we conclude that (\ref{field}) holds for all $P\in\Z[x]$ with $\deg P\leq n$, and the induction is complete.

\section{Acknowledgment}
The author warmly thanks Arnab Sen for pointing out Problem 14 in \cite{AIM}.




\bibliographystyle{elsarticle-num}
\bibliography{draft}







\end{document}